\newtheorem{teorema}{Theorem}
\newtheorem*{teorema*}{Theorem}
\newtheorem{corolario}[teorema]{Corollary}
\newtheorem{definicao}[teorema]{Definition}
\newtheorem{lema}[teorema]{Lemma}
\newtheorem{notacao}[teorema]{Notation}
\numberwithin{equation}{section}
\begin{document}


\title{Two Cardinal inequalities about bidiscrete systems}

\author{Clayton Suguio Hida}
\thanks{The author was supported by FAPESP grants (2011/01285 - 3 and 2013/01609-9) and would like to thank Prof. Piotr Koszmider for his guidance and assistance during the preparation of this paper}

\address{Departamento de Matem\'atica, Instituto de Matem\'atica e Estat\'\i stica, Universidade de S\~ao Paulo,
Caixa Postal 66281, 05314-970, S\~ao Paulo, Brazil}
\email{suguio@ime.usp.br}

\begin{abstract}
We consider the cardinal invariant $bd$ defined by M. D{\v{z}}amonja and I. Juh{\'a}sz concerning bidiscrete systems. Using the relation between bidiscrete systems and irredundance for a compact Hausdorff space $K$, we prove that ${w(K)\leq bd(K)\cdot hL(K)^+}$, generalizing a result of S. Todorcevic concerning the irredundance in Boolean algebras and we prove that for every maximal irredundant family $\mathcal{F}\subset C(K)$, there is a $\pi$-base $\mathcal{B}$ for $K$ with $|\mathcal{F}|=|\mathcal{B}|$, a result analogous to the McKenzie Theorem for Boolean algebras in the context of compact spaces. In particular, it is a consequence of the latter result that $\pi(K)\leq bd(K)$ for every compact Hausdorff space $K$. From the relation between bidiscrete systems and biorthogonal systems, we obtain some results about biorthogonal systems in Banach spaces of the form $C(K)$.
\end{abstract}
\maketitle

\section*{Introduction}\label{funcoescardinais}
In this work, we consider the notion of bidiscrete systems defined by M. D{\v{z}}amonja and I. Juh{\'a}sz in \cite{dzmonja-juhasz-bidiscrete}:
\begin{definicao}[M. D{\v{z}}amonja, I. Juh{\'a}sz \cite{dzmonja-juhasz-bidiscrete}]Let $K$ be a compact Hausdorff space. A sequence $S=\{(x_\alpha^0,x_\alpha^1): \alpha<\kappa\}$ of pairs of points in $K$ (i.e. a subfamily of $K^2$) is called a bidiscrete system in $K$ if there exists a family $\{f_\alpha :\alpha<\kappa\}$ of real valued continuous functions on $K$ satisfying for every $\alpha,\beta<\kappa$:
\begin{itemize}
\item $f_\alpha(x_\alpha^l)=l$ for $l \in \{0,1\}$,
\item if $\alpha\neq \beta$ then $f_\alpha(x_\beta^0)=f_\alpha(x_\beta^1)$.
\end{itemize}

The cardinal invariant $bd(K)$ is defined to be 
$$bd(K):=\sup\{|S|: S \textrm{ is a bidiscrete system in } K\}.$$
\end{definicao}

The purpose of this work is to obtain some new cardinal inequalities for a compact Hausdorff space $K$, relating the cardinal invariant $bd(K)$ and some other topological cardinal invariants of $K$.
Firstly, we will translate the definition of $bd(K)$ in terms of Banach spaces and in terms of Banach algebras.
Given a compact Hausdorff space $K$, we consider the Banach space $C(K)$ of all real valued continuous functions on $K$ with the supremum norm. In \cite{dzmonja-juhasz-bidiscrete}, the authors considered the notion of a nice biorthogonal system for Banach spaces of the form $C(K)$. We say that a sequence $(f_\alpha, \mu_\alpha)_{\alpha<\kappa}$ in $C(K)\times M(K)$ (where $M(K)$ is the space of Radon measures on $K$) is a biorthogonal system if for every $\alpha, \beta <\kappa$
$$
\mu_\alpha(f_\beta) = \left\{
\begin{array}{rcl}
1,& \mbox{if} & \alpha=\beta\\
0, & \mbox{if} & \alpha\neq\beta.
\end{array}
\right.
$$
Moreover, if for each $\alpha<\kappa$, there are distinct points $x_\alpha, y_\alpha \in K$ such that ${\mu_\alpha=\delta_{x_\alpha}-\delta_{y_\alpha}}$, where $\delta_x$ denotes a Dirac measure centred in $x$, we say that the sequence $(f_\alpha, \mu_\alpha)_{\alpha<\kappa}$ is a nice biorthogonal system. 

We observe that, if  ${\{(x_\alpha,y_\alpha): \alpha<\kappa\}}$ is a bidiscrete system in $K$, then by definition, there exists a family of functions $\{f_\alpha :\alpha<\kappa\}$ such that ${(f_\alpha, \delta_{x_\alpha}-\delta_{y_\alpha})_{\alpha<\kappa}}$ is a nice biorthogonal system. In the same way, if ${(f_\alpha, \delta_{x_\alpha}-\delta_{y_\alpha})_{\alpha<\kappa}}$ is a nice biorthogonal system, then it is easy to see that ${\{(x_\alpha,y_\alpha): \alpha<\kappa\}}$ is a bidiscrete system. From this observation we conclude that 
$$bd(K)=\sup\{\kappa: \textrm{ there is a nice biorthogonal system of size } \kappa \textrm{ in } C(K)\}.$$

In other words, the cardinal $bd(K)$ is equal to the cardinal $nbiort_2(K)$ as defined in \cite{somepiotr11}.

For a compact Hausdorff space $K$, the corresponding Banach space $C(K)$ is a Banach algebra, where the multiplication is the pointwise multiplication of functions. A set $\mathcal{F}\subset C(K)$ is said to be irredundant if and only if for every $f \in \mathcal{F}$, there exists a Banach subalgebra $\mathcal{B}\subset C(K)$ such that $\mathcal{F}\setminus\{f\}\subset \mathcal{B}$ and $f\notin \mathcal{B}$. This is equivalent to say that, for every $f \in \mathcal{F}$, $f$ does not belong to the Banach subalgebra generated by $\mathcal{F}\setminus\{f\}$, where the Banach subalgebra generated by $\mathcal{F}\setminus\{f\}$ is the smallest Banach subalgebra of $C(K)$ containing $\mathcal{F}\setminus\{f\}$.
From Theorem 5.4 of \cite{somepiotr11}, for a compact Hausdorff space $K$, 
$C(K)$ contains an irredundant set of size $\kappa$ if and only if it contains a nice biorthogonal system of size $\kappa$. This implies that
$$bd(K)=\sup\{|\mathcal{F}|: \mathcal{F}\subset C(K) \textrm{ is an irredundant set of } C(K)\}.$$
We conclude that for every compact Hausdorff space $K$, there is a bidiscrete system in $K$ of size $\kappa$ if and only if there is an irredundant set of size $\kappa$ in $C(K)$. This allows us to work in the frame of irredundant sets instead of bidiscrete systems.
The definition of $bd(K)$ in terms of irredundant sets is similar to the definition of the well know cardinal invariant $irr(\mathcal{A})$ for a Boolean algebra $\mathcal{A}$. The cardinal invariant $irr(\mathcal{A})$ is defined in analogy to $bd$, where a subset $B\subset \mathcal{A}$ is irredundant if and only if for every $b \in B$, $b$ does not belong to the Boolean subalgebra generated by $B\setminus\{b\}$. See \cite{cardinalba} for definitions and some properties of $irr(\mathcal{A})$ for a Boolean algebra  $\mathcal{A}$. In particular, if $\mathcal{A}$ is a Boolean algebra and if $K_\mathcal{\mathcal{A}}$ denotes its Stone space, we have that $irr(\mathcal{A})\leq bd(K_\mathcal{A})$. In fact, suppose $B\subset \mathcal{A}$ is an irredundant set. Then $\mathcal{F}:=\{\chi_{[b]}:b \in B\}$ is an irredundant set in $C(K_\mathcal{A})$, where $[b]$ denotes the clopen set of $K_\mathcal{A}$ determined by $b \in B$. However, we do not know if we have the equality $irr(\mathcal{A})=bd(K_\mathcal{A})$. Theorem 5.4 of \cite{somepiotr11} tells us that for every Boolean algebra $\mathcal{A}$, $bd(K_\mathcal{A})\leq s(K_{\mathcal{A}}^2)$, where $s(K_\mathcal{A})$ is the spread of $K_\mathcal{A}$. This implies that if $\mathcal{A}$ is a counterexample for the equality $irr(\mathcal{A})=bd(K_\mathcal{A})$, then $irr(\mathcal{A})< s(K_{\mathcal{A}}^2)$. For example, the Boolean algebra $\mathcal{B}$ constructed in \cite{shelah-roslanowski} satisfies $irr(\mathcal{B})< s(K_{\mathcal{B}}^2)$ .

In \textbf{section 1}, we study the relation between the cardinal invariant $bd(K)$ and $w(K)$, the  topological weight of $K$. 
We prove the following result
\newtheorem*{teoremasbvar3}{Theorem \ref{teoremasbvar3}}
\begin{teoremasbvar3}
Let $K$ be a compact Hausdorff space. Then
$$w(K)\leq bd(K) \cdot hL(K)^+$$
where $w(K)$ is the topological weight of $K$ and $hL(K)$ is the hereditarily Lindelof degree of $K$.
\end{teoremasbvar3}

In \cite{irrstevo93}, S. Todorcevic proved that for every Boolean algebra $\mathcal{A}$,
$$|\mathcal{A}|\leq irr(\mathcal{A})\cdot ig(\mathcal{A})^+,$$
where $ig(\mathcal{A})$ is the minimal $\theta$ such that every ideal of $\mathcal{A}$ is generated by $\leq \theta$ elements. 
For a Boolean algebra $\mathcal{A}$, we have that $|\mathcal{A}|=w(K_\mathcal{A})$ and  ${irr(\mathcal{A})\leq bd(K_\mathcal{A})}$. Moreover, $ig(\mathcal{A})=hL(K_\mathcal{A})$. This follows from the fact that if $Y\subset K_\mathcal{A}$ is open, then $M:=\{a\in \mathcal{A}: [a]\subset Y\}$ is an ideal in $\mathcal{A}$. With these observations, we conclude that Theorem \ref{teoremasbvar3} is a generalization of the aforementioned Todocervic's result for Boolean algebras.

In \textbf{section 2}, we study the relation between the cardinal invariant $bd(K)$ and $\pi(K)$, the topological $\pi$-weight of $K$.
We prove the following result:

\newtheorem*{teoremaib3}{Theorem \ref{teoremaib3}}
\begin{teoremaib3}Let $K$ be a compact Hausdorff space. Then for every maximal irredundant set $\mathcal{F}$ in $C(K)$, there exists a $\pi$-base $\mathcal{B}$ of $K$ with $|\mathcal{B}|=|\mathcal{F}|$.
\end{teoremaib3}
The above result is a generalization of the McKenzie Theorem, which tells us that for every maximal irredundant set $B$ in a Boolean algebra $\mathcal{A}$, there is a dense subset $D$ of $\mathcal{A}$ with $|B|=|D|$. By a dense set in a Boolean algebra $\mathcal{A}$ we mean a subset $B$ with the property that for every element $a \in \mathcal{A}\setminus{0}$, there exists $b \in B$ such that $0<b\leq a$.  A reference for the McKenzie Theorem can be found in \cite{handbookboolean}, Proposition 4.23.
It is easy to see that, if $B$ is a dense set in a Boolean algebra $\mathcal{A}$, then $\mathcal{B}:=\{[b]: b \in B\}$ is a $\pi$-base for $K_\mathcal{A}$. It is important to mention that we cannot conclude Theorem \ref{teoremaib3} from the McKenzie Theorem in the special case of Boolean spaces because we do not know if a maximal irredundant set $\mathcal{F}$ in $C(K_\mathcal{A})$ gives us a maximal irredundant set in $\mathcal{A}$.

From Theorem \ref{teoremaib3}, we get the following result:
\begin{corolario}\label{corolarioteoremasb3}
For a compact Hausdorff space $K$, the following inequality holds:
$$\pi(K)\leq bd(K),$$
where $\pi(K)$ is the $\pi$-weight of $K$.
\end{corolario}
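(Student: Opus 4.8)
The plan is to read the corollary directly off Theorem \ref{teoremaib3}, combined with the reformulation of $bd(K)$ in terms of irredundant sets recorded in the Introduction, namely $bd(K)=\sup\{|\mathcal{F}|:\mathcal{F}\subseteq C(K)\ \text{is irredundant}\}$.

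First I would fix a maximal irredundant set $\mathcal{F}\subseteq C(K)$; the existence of such a set is the only preliminary point and is obtained by a routine maximality argument (one applies Zorn's Lemma to the family of irredundant subsets of $C(K)$ ordered by inclusion, using that every subset of an irredundant set is again irredundant). I would then apply Theorem \ref{teoremaib3} to this particular $\mathcal{F}$, which produces a $\pi$-base $\mathcal{B}$ of $K$ with $|\mathcal{B}|=|\mathcal{F}|$.

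The inequality is then immediate. On the one hand $\pi(K)$ is, by definition, the least cardinality of a $\pi$-base of $K$, so $\pi(K)\leq|\mathcal{B}|$. On the other hand $\mathcal{F}$ is in particular an irredundant subset of $C(K)$, so the supremum formula above gives $|\mathcal{F}|\leq bd(K)$. Since $|\mathcal{B}|=|\mathcal{F}|$, chaining these bounds yields $\pi(K)\leq|\mathcal{B}|=|\mathcal{F}|\leq bd(K)$, which is the assertion of the corollary.

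There is essentially no obstacle once Theorem \ref{teoremaib3} is available. The one place deserving a moment of care is that the argument couples a bound \emph{from below} for the $\min$-type invariant $\pi(K)$ with a bound \emph{from above} for the $\sup$-type invariant $bd(K)$; this is legitimate precisely because a single family $\mathcal{F}$ does both jobs at once — being irredundant it is counted by $bd(K)$, and by the theorem it has exactly the cardinality of a genuine $\pi$-base, so it bounds $\pi(K)$. Besides that, one should simply make sure at the outset that a maximal irredundant set exists, which is what the appeal to Zorn's Lemma secures.
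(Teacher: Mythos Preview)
Your derivation is precisely what the paper has in mind: the paper states the corollary without proof, saying only that it follows from Theorem \ref{teoremaib3}, and selecting a maximal irredundant set and invoking that theorem is exactly how one reads it off. One point deserves more care than you give it, however. To run Zorn's Lemma you need the \emph{union} of a chain of irredundant sets to be irredundant; the hereditary property you cite (``every subset of an irredundant set is again irredundant'') points in the wrong direction and does not yield this. In fact the union of a chain of irredundant families in $C(K)$ can fail to be irredundant: in $C([0,1]^\omega)$, writing $\pi_k$ for the $k$-th coordinate projection and $g_n=\pi_0+2^{-n}\pi_n$, each finite family $\{\pi_0,g_1,\dots,g_n\}$ is irredundant, yet
\[
\pi_0=\lim_{N\to\infty}\frac{1}{N}\sum_{n=1}^{N}g_n
\]
lies in the closed subalgebra generated by $\{g_n:n\ge 1\}$, so the union $\{\pi_0,g_1,g_2,\dots\}$ is not irredundant. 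Thus irredundance in $C(K)$ is not of finite character and the Tukey--Zorn route does not go through as written. The paper does not address this point either --- it tacitly assumes maximal irredundant families exist, since Theorem \ref{teoremaib3} is phrased for them --- so you are not omitting any argument the paper actually supplies; but the Zorn step as you have justified it is not correct.
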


Using the fact that $bd(K)\leq s(K^2)$ (consequence of Theorem 5.4 and Theorem 5.5 in \cite{somepiotr11}) we get that 
\begin{corolario}\label{desigualdejuhasz}For a compact Hausdorff space $K$, the following inequality holds:
$$\pi(K)\leq s(K^2).$$
\end{corolario}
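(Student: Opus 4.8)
The plan is to obtain the inequality as the two-step chain
\[
\pi(K)\le bd(K)\le s(K^2),
\]
so essentially no work is required beyond invoking facts already at hand. For the first step I would quote Corollary \ref{corolarioteoremasb3} directly: given any maximal irredundant set $\mathcal{F}\subset C(K)$, Theorem \ref{teoremaib3} provides a $\pi$-base $\mathcal{B}$ of $K$ with $|\mathcal{B}|=|\mathcal{F}|$, and since $\mathcal{F}$ is in particular an irredundant set we have $|\mathcal{F}|\le bd(K)$ by the characterization $bd(K)=\sup\{|\mathcal{F}|:\mathcal{F}\subset C(K)\text{ irredundant}\}$ established in the Introduction; hence $\pi(K)\le|\mathcal{B}|=|\mathcal{F}|\le bd(K)$.

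For the second step I would recall why $bd(K)\le s(K^2)$. Let $\{(x_\alpha^0,x_\alpha^1):\alpha<\kappa\}$ be a bidiscrete system with witnessing functions $\{f_\alpha:\alpha<\kappa\}$, and define continuous functions $g_\alpha\colon K^2\to\mathbb{R}$ by $g_\alpha(x,y)=f_\alpha(x)-f_\alpha(y)$. The two clauses in the definition of a bidiscrete system give $g_\alpha(x_\alpha^0,x_\alpha^1)=f_\alpha(x_\alpha^0)-f_\alpha(x_\alpha^1)=0-1=-1$ and $g_\alpha(x_\beta^0,x_\beta^1)=f_\alpha(x_\beta^0)-f_\alpha(x_\beta^1)=0$ for $\beta\ne\alpha$, so the open sets $U_\alpha=\{(x,y)\in K^2:|g_\alpha(x,y)|>1/2\}$ isolate each point $(x_\alpha^0,x_\alpha^1)$ from all the others in the collection. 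Therefore $\{(x_\alpha^0,x_\alpha^1):\alpha<\kappa\}$ is a discrete subspace of $K^2$ (in particular these $\kappa$ points are pairwise distinct), so $\kappa\le s(K^2)$; taking the supremum over all bidiscrete systems gives $bd(K)\le s(K^2)$, which is exactly the content of Theorems 5.4 and 5.5 of \cite{somepiotr11}. Concatenating the two steps yields $\pi(K)\le s(K^2)$.

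I expect no real obstacle here: all the substance lies in Theorem \ref{teoremaib3} (and, for the second link, in the cited results of \cite{somepiotr11}), while Corollary \ref{desigualdejuhasz} itself is a purely formal consequence, namely the concatenation of two inequalities. The only thing deserving a moment's attention is the bookkeeping with suprema in the second step (a bidiscrete system of size $\kappa$ forces $\kappa\le s(K^2)$ for each such $\kappa$, hence $bd(K)\le s(K^2)$), and this is routine.
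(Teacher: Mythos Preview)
Your proposal is correct and follows exactly the paper's approach: the corollary is obtained as the chain $\pi(K)\le bd(K)\le s(K^2)$, the first inequality being Corollary~\ref{corolarioteoremasb3} and the second being cited from \cite{somepiotr11}. You actually supply more detail than the paper does for the second step (the paper merely cites Theorems~5.4 and~5.5 of \cite{somepiotr11}), and your direct argument that a bidiscrete system yields a discrete subset of $K^2$ is correct.
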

I. Juh{\'a}sz and Z. Szentmikl{\'o}ssy proved that for every compact Hausdorff space $K$, we have $d(K)\leq s(K^2)$ and $d(K)\leq bd(K)$ (Theorem 3 of \cite{juhaz} and Theorem 3.5 of \cite{dzmonja-juhasz-bidiscrete}). Using well knows results on cardinal invariants on a compact Hausdorff space $K$, the inequality in Corollary \ref{desigualdejuhasz} is a consequence of the inequality $d(K)\leq s(K^2)$, but the inequality in Corollary \ref{corolarioteoremasb3} seems to be stronger than the inequality $d(K)\leq bd(K)$.

For $X$ a regular space such that $w(X)$ is a strong cardinal, we have that $w(X)=\pi(X)$. From this and Corollary \ref{corolarioteoremasb3} we get the following result
\begin{corolario}\label{corolariownbiort}
If $K$ is a compact Hausdorff space such that $w(K)$ is a strong cardinal, we have that 
$$w(K)=bd(K).$$
\end{corolario}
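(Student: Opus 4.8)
The plan is to deduce Corollary \ref{corolariownbiort} from Corollary \ref{corolarioteoremasb3} together with the standard fact that for a regular space $X$ whose weight is a strong limit cardinal one has $\pi(X)=w(X)$. First I would recall the inequalities that are always available: for any compact Hausdorff space $K$ we have $\pi(K)\leq w(K)$ (a base is in particular a $\pi$-base), and $bd(K)\leq w(K)$ (any bidiscrete system of size $\kappa$ produces $\kappa$ many functions, hence $\kappa$ many distinct cozero sets, which can be refined to at most $w(K)$ many basic open sets; more directly $bd(K)\leq s(K^2)\leq w(K^2)=w(K)$). So the content is the reverse inequality $w(K)\leq bd(K)$.

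Next, assuming $w(K)$ is a strong limit cardinal, I would invoke the classical result (see, e.g., Juh\'asz's book on cardinal functions, or Engelking) that for a regular $T_1$ space $X$, if $w(X)$ is a strong limit then $w(X)=\pi(X)$; the proof uses that $|X|\leq 2^{\pi(X)}$-type bounds force $\pi(X)$ to be large when $w(X)$ is, since every open set is a union of at most $\pi(X)$ members of a $\pi$-base and a strong limit cardinal cannot be the cofinal limit reached by too few smaller-cardinality pieces. Applying this to our compact (hence regular) $K$ gives $w(K)=\pi(K)$.

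Then I would simply chain: $w(K)=\pi(K)\leq bd(K)\leq w(K)$, where the middle inequality is Corollary \ref{corolarioteoremasb3} and the last is the elementary bound noted above. This forces $bd(K)=w(K)$, which is the assertion.

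The only real subtlety — and hence the step I would be most careful about — is the precise hypothesis on $w(K)$: the statement says ``strong cardinal'', which in this context must mean strong \emph{limit} cardinal, and one should make sure the cited fact $w(X)=\pi(X)$ is applied with exactly the regularity and cardinal-arithmetic hypotheses under which it is known (it can fail without the strong-limit assumption, e.g. $\pi(2^{\omega_1})=\omega_1<2^{\omega_1}=w(2^{\omega_1})$ under CH-type assumptions). Beyond pinning down this hypothesis and citing the $\pi$-weight/weight coincidence correctly, the argument is a short concatenation of already-established inequalities, so no further obstacle is expected.
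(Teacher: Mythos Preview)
Your proposal is correct and follows exactly the paper's approach: the paper derives the corollary from Corollary \ref{corolarioteoremasb3} together with the stated fact that for a regular space $X$ with $w(X)$ a strong (limit) cardinal one has $w(X)=\pi(X)$, and then the chain $w(K)=\pi(K)\leq bd(K)\leq w(K)$ gives the equality. Your caution about the meaning of ``strong cardinal'' (i.e., strong limit) is well placed and matches the intended hypothesis.
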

In particular, in the context of biorthogonal systems, we have that $${w(K)=nbiort_2(K)}$$ for $K$ satisfying the condition in Corollary \ref{corolariownbiort}.


We finish this introduction with some remarks about the relation of the above results with semibiorthogonal systems in Banach spaces of the form $C(K)$.
\begin{definicao}Let $X$ be a Banach space and $X^*$ its dual space. A sequence $(x_\alpha, x^*_\alpha)_{\alpha <\kappa}$ in $X\times X^*$ is a semibiorthogonal system in $X$ if and only if for every $\alpha, \beta <\kappa$,we have:\begin{itemize}
\item $x^*_\alpha(x_\alpha)=1$,
\item $x^*_\alpha(x_\beta)=0$ if $\beta<\alpha$ and
\item $x^*_\alpha(x_\beta)\geq 0$ if $\beta>\alpha$.
\end{itemize}
We define $sbiort(X)$ as the supremum of cardinalities of semibiorthogonal systems on $X$.
\end{definicao}
In \cite{quocientstevo}, Theorem 9, S. Todorcevic proved that for any compact Hausdorff space $K$ of weight bigger than $\omega_1$, there is an uncountable semibiorthogonal system on $K$. With Theorem \ref{teoremasbvar3}, we can prove the following generalization of this result:
\begin{corolario}\label{corolarioquesta}Let $K$ be a compact Hausdorff space. Then
$$w(K)\leq sbiort(C(K))^+.$$
\end{corolario}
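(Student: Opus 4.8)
The plan is to reduce the statement to Theorem \ref{teoremasbvar3} by showing that every bidiscrete system gives rise to a semibiorthogonal system of the same cardinality, hence $bd(C(K)) \le sbiort(C(K))$, and then to control the $hL(K)^+$ term appearing in Theorem \ref{teoremasbvar3} by $sbiort(C(K))^+$ as well. For the first reduction, suppose $S = \{(x_\alpha^0, x_\alpha^1) : \alpha < \kappa\}$ is a bidiscrete system witnessed by functions $\{f_\alpha : \alpha < \kappa\}$. Then $(g_\alpha, \mu_\alpha)_{\alpha<\kappa}$ with $g_\alpha = f_\alpha$ and $\mu_\alpha = \delta_{x_\alpha^1} - \delta_{x_\alpha^0}$ is a (nice) biorthogonal system, as noted in the introduction; in particular it is a semibiorthogonal system (the sign condition $x^*_\alpha(x_\beta) \ge 0$ for $\beta > \alpha$ holds vacuously, since all cross terms vanish). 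Taking suprema over all bidiscrete systems yields $bd(K) \le sbiort(C(K))$.

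Next I would handle the hereditarily Lindelöf term. The clean route is to invoke the density inequality: Juhász and Szentmiklóssy's result $d(K) \le bd(K)$ quoted in the excerpt, together with the elementary fact $hL(K) \le d(K)^+$ — more precisely, for a compact Hausdorff space one has $hL(K) \le 2^{d(K)}$, but what is actually needed here is only $hL(K)^+ \le (2^{d(K)})^+$, which combined with $w(K) \le 2^{d(K)}$ already gives a bound; however this is too weak. The correct move is instead to observe that Theorem \ref{teoremasbvar3} gives $w(K) \le bd(K) \cdot hL(K)^+$, and that $hL(K) \le w(K)$ always, so if $hL(K)^+ > bd(K)$ then $w(K) \le hL(K)^+$, and one separately needs $hL(K) \le sbiort(C(K))$. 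This last inequality should follow because a left-separated subspace of $K$ of size $\lambda$ yields, via characteristic-type functions and point-mass differences respecting the separating order, a semibiorthogonal system of size $\lambda$; taking the supremum over hereditarily Lindelöf "defects" gives $hL(K) \le sbiort(C(K))$ (using that $hL(K)$ equals the supremum of cardinalities of left-separated subspaces).

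Putting the pieces together: by Theorem \ref{teoremasbvar3}, $w(K) \le bd(K) \cdot hL(K)^+ \le sbiort(C(K)) \cdot sbiort(C(K))^+ = sbiort(C(K))^+$, where the middle inequality uses $bd(K) \le sbiort(C(K))$ and $hL(K) \le sbiort(C(K))$, hence $hL(K)^+ \le sbiort(C(K))^+$, and the final equality is the absorption $\lambda \cdot \lambda^+ = \lambda^+$.

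The main obstacle I anticipate is the inequality $hL(K) \le sbiort(C(K))$: producing a semibiorthogonal system from a left-separated subspace requires choosing, for each point $x_\alpha$ in the left-separated enumeration, a continuous function $h_\alpha$ with $h_\alpha(x_\alpha) = 1$ that vanishes on $\{x_\beta : \beta < \alpha\}$ (possible by normality, since that set has closure missing $x_\alpha$), paired with $\delta_{x_\alpha}$; the required sign condition $\delta_{x_\alpha}(h_\beta) = h_\beta(x_\alpha) \ge 0$ for $\beta > \alpha$ is \emph{not} automatic and one must arrange $h_\beta \ge 0$ throughout, which is harmless. Verifying that this genuinely yields the left-separation supremum — rather than merely some large semibiorthogonal system — and confirming the standard identity relating $hL(K)$ to left-separated subspaces, is where the care is needed; the rest is bookkeeping with cardinal arithmetic.
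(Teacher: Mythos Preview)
Your overall strategy and the final cardinal arithmetic
\[
w(K)\le bd(K)\cdot hL(K)^+\le sbiort(C(K))\cdot sbiort(C(K))^+=sbiort(C(K))^+
\]
are exactly what the paper does; the reduction $bd(K)\le sbiort(C(K))$ via nice biorthogonal systems is also the same. The difference is only in how the inequality $hL(K)\le sbiort(C(K))$ is obtained: the paper simply invokes Lazar's theorem, while you try to build the semibiorthogonal system by hand from a separated sequence.

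Here your argument has a genuine slip. First, the cardinal $hL(K)$ is governed by \emph{right}-separated subspaces (initial segments open), not left-separated ones; left-separated sequences control $hd(K)$. Second, and more concretely, your proposed system $(h_\alpha,\delta_{x_\alpha})$ with $h_\alpha$ vanishing on $\{x_\beta:\beta<\alpha\}$ does not satisfy the semibiorthogonality conditions as stated: for $\beta<\alpha$ one needs $\delta_{x_\alpha}(h_\beta)=h_\beta(x_\alpha)=0$, but your hypothesis only gives $h_\beta\equiv 0$ on $\{x_\gamma:\gamma<\beta\}$, which says nothing about $x_\alpha$ since $\alpha>\beta$. (The condition you explicitly worry about, $h_\beta(x_\alpha)\ge 0$ for $\beta>\alpha$, is in fact automatically $0$ in your construction.) The repair is exactly to swap directions: take a right-separated sequence, so that $x_\alpha\notin\overline{\{x_\beta:\beta>\alpha\}}$, and choose $h_\alpha\ge 0$ with $h_\alpha(x_\alpha)=1$ vanishing on $\{x_\beta:\beta>\alpha\}$. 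Then for $\beta<\alpha$ one has $x_\alpha\in\{x_\gamma:\gamma>\beta\}$ and hence $h_\beta(x_\alpha)=0$, while nonnegativity handles $\beta>\alpha$. With this correction your construction is essentially the content of Lazar's theorem in this setting, and the proof goes through.
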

\begin{proof}
It is clear that a nice biorthogonal system is in particular a semibiorthogonal system. This implies that $$bd(K)\leq sbiort(C(K)).$$
For every cardinal $\kappa<hL(K)$, there is a semibiorthogonal sequence of size $\kappa$ (This is the content of Lazar's theorem. See \cite{lazar}). In particular, we have
$$hL(K)\leq sbiort(C(K)).$$
Then by Theorem \ref{teoremasbvar3} we conclude that
$$w(K)\leq bd(K) \cdot hL(K)^+\leq sbiort(C(K))^+.$$
\end{proof}
In \cite{somepiotr11}, Question 6.7, P. Koszmider asked if it is true that for every Banach space X, we have $d(X)\leq sbiort(X)^+$. Corollary \ref{corolarioquesta} gives a positive answer to this question in the case of Banach spaces of the form $C(K)$. 

The result of Corollary \ref{corolarioquesta} tells us that it is impossible to exist a Banach space of  the form $C(K)$ with big density and containing only small semibiorthogonal systems. In particular, there is no compact Hasdorff space of weight bigger than $\omega_1$ without uncountable semibiorthogonal systems (a result of S.Todorcevic). It is important to mention that this is not the case for biorthogonal systems, as shows the example of C. Brech and P. Koszmider in \cite{brechkoszmider} where they constructed a consistent example of a compact Hausdorff space $K$ of weight $\omega_2$ such that $C(K)$ does not have uncountable biorthogonal systems.

The notation and terminology of this paper are standard. In particular, a Boolean space is a totally disconnected compact Hausdorff space. For cardinal invariants, we use the results and notations of R. Hodel \cite{handbookconjuntos}. See the article \cite{somepiotr11} for the cardinal invariants related to biorthogonal systems.

\section{Bidiscrete systems and topological weight}\label{secao2fcardinais}
In this section, we prove a relation between 
the cardinal invariants $w(K)$, $hL(K)$ and $bd(K)$ for a compact Hausdorff space $K$. 

Following \cite{quocientstevo}, Theorem 9, we consider the following notation:
\begin{notacao}
Let $K$ be a compact Hausdorff space. For $f \in C(K)$ and $r \in \mathbb{R}$ we define
\begin{itemize}
\item $K(f\leq r):=f^{-1}((-\infty, r])$,
\item $K(f< r):=f^{-1}((-\infty, r))$,
\item $K(f\geq r) :=f^{-1}([r, +\infty))$ ,
\item $K(f> r) := f^{-1}((\infty, r))$.
\end{itemize}
\end{notacao}
We begin by proving an auxiliary lemma. 
\begin{lema}\label{lemasbvar4}
Let $K$ be a compact Hausdorff space of weight $\kappa$ such that $hL(K)^+<\kappa$. Then for every $\gamma<\kappa$, there exist
\begin{enumerate}[1)]
\item $\textrm{ }\mathcal{F}_\gamma \subset C(K),$
\item $\textrm{ }f_\gamma \in C(K),$
\item $\textrm{ } x_\gamma, y_\gamma \in K$
\end{enumerate}
such that
\begin{enumerate}[a)]

\item $\textrm{ }|\mathcal{F}_\gamma|<\kappa \textrm{ and } \mathcal{F}_\gamma \subset \mathcal{F}_{\gamma '}$  for every $\gamma\leq \gamma '$;
\item $\textrm{ } \mathcal{F}_\lambda = \cup_{\gamma<\lambda}\mathcal{F}_\gamma$ for $\lambda<\kappa$ a limit ordinal;
\item $\textrm{ } f_\gamma \in \mathcal{F}_{\gamma+1}$ and $f_\gamma(x_\gamma)=0, f_\gamma(y_\gamma)=1$ for every $\gamma<\kappa$;
\item For every $f \in \mathcal{F}_\gamma$,  $f(x_\gamma)=f(y_\gamma)$;
\item If $K(f\leq p)\cap K(g\geq q)=\emptyset$ for $f,g \in \mathcal{F}_\gamma$ and $p,q \in \mathbb{Q}$, then for every neighborhood $V,W$ of $x_\gamma$ and $y_\gamma$ respectively, the following hold: 
\begin{enumerate}[i)]
\item there is $h_1 \in \mathcal{F}_\gamma$ such that $h_1|_{K(f\leq p)}\equiv 0$ and $h_1|_{K(g\geq q)}\equiv 1$;
\item there is $h_2 \in \mathcal{F}_{\gamma+1}$ such that $x_\gamma \in K(h_2<1)\subset V$;
\item there is $h_3 \in \mathcal{F}_{\gamma+1}$ such that $y_\gamma \in K(h_3>0)\subset W$.
\end{enumerate}

\end{enumerate}
\end{lema}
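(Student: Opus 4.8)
The plan is to build the objects $\mathcal{F}_\gamma, f_\gamma, x_\gamma, y_\gamma$ by transfinite recursion on $\gamma<\kappa$, taking unions at limits so that (b) holds automatically. Fix a base $\mathcal{U}$ for $K$ with $|\mathcal{U}|=\kappa$ and an enumeration $\langle U_\gamma:\gamma<\kappa\rangle$ of it; the recursion will be arranged so that at stage $\gamma$ we ``kill'' the possibility that the family $\mathcal{F}_\gamma$ separates the points of $U_\gamma$ in a strong enough sense, which is what forces $w(K)\le\kappa$ — but for the lemma as stated we only need the listed properties. At a successor stage $\gamma+1$, I first need to produce the pair $x_\gamma, y_\gamma$ and the function $f_\gamma$; the point is that since $|\mathcal{F}_\gamma|<\kappa=w(K)$, the algebra (or just the separating family) generated by $\mathcal{F}_\gamma$ does not separate all points of $K$, so there exist distinct $x_\gamma, y_\gamma$ not separated by $\mathcal{F}_\gamma$ — this gives (d) — and then by Urysohn's lemma there is $f_\gamma\in C(K)$ with $f_\gamma(x_\gamma)=0$, $f_\gamma(y_\gamma)=1$, which we throw in to get (c).

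The heart of the construction is guaranteeing the closure condition (e). The idea is to view (e) as a closing-off requirement: for each pair $f,g\in\mathcal{F}_\gamma$ and each $p,q\in\mathbb{Q}$ with $K(f\le p)\cap K(g\ge q)=\emptyset$, Urysohn's lemma supplies a single function $h_1$ with $h_1|_{K(f\le p)}\equiv 0$ and $h_1|_{K(g\ge q)}\equiv 1$; there are only $|\mathcal{F}_\gamma|\cdot\aleph_0<\kappa$ such quadruples, so we can add all these $h_1$'s to $\mathcal{F}_\gamma$, still keeping the family of size $<\kappa$. Since (e)(i) wants $h_1$ to live in $\mathcal{F}_\gamma$ itself (not $\mathcal{F}_{\gamma+1}$), the clean way is to close off: define an operator $\Phi$ that adjoins to a family $\mathcal{G}$ one Urysohn function $h_1$ for each separated quadruple from $\mathcal{G}$, and then set $\mathcal{F}_\gamma$ (for $\gamma$ a successor or $0$) to be obtained by iterating $\Phi$ countably many times and taking the union, so that $\mathcal{F}_\gamma=\Phi(\mathcal{F}_\gamma)$ is a fixed point; this keeps cardinality $<\kappa$ because $\kappa$ will have uncountable cofinality (here one uses $hL(K)^+<\kappa$ together with the fact that for compact $K$, $w(K)$ has uncountable cofinality, or one simply builds the fixed point inside a single recursion step). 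For (e)(ii) and (e)(iii): given neighborhoods $V,W$ of $x_\gamma,y_\gamma$, pick by Urysohn $h_2\in C(K)$ with $h_2(x_\gamma)=0$ and $h_2\equiv 1$ off a smaller neighborhood contained in $V$, so that $x_\gamma\in K(h_2<1)\subset V$, and symmetrically $h_3$ with $h_3(y_\gamma)=1$, $h_3\equiv 0$ off a neighborhood inside $W$, giving $y_\gamma\in K(h_3>0)\subset W$; these go into $\mathcal{F}_{\gamma+1}$. There are at most $w(K)^{?}$... rather, at most $|\mathcal{U}|\cdot|\mathcal{U}|=\kappa$ pairs $(V,W)$, but since we only need \emph{existence} of such $h_2,h_3$ for \emph{every} $V,W$, and these can be chosen from a base, we add one $h_2,h_3$ per basic pair $(U,U')$ with $x_\gamma\in U$, $y_\gamma\in U'$ — that is $\le\kappa$ many — which would blow up the cardinality. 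The resolution is that (e)(ii)–(iii) only assert membership in $\mathcal{F}_{\gamma+1}$ and $\mathcal{F}_{\gamma+1}$ is allowed to be as large as we need \emph{at a single step}, as long as each $\mathcal{F}_\delta$ has size $<\kappa$: so we add, for each basic neighborhood pair, the corresponding $h_2,h_3$ to $\mathcal{F}_{\gamma+1}$, and this is fine provided $\kappa$ has cofinality $>\kappa$... no — provided the number added, $\le\kappa$, is $<\kappa$, which fails. The correct fix, which I would implement, is to weaken the indexing: it suffices to add $h_2,h_3$ only for $V,W$ of the form $K(\phi<1), K(\psi>0)$ for $\phi,\psi$ already in $\mathcal{F}_\gamma$ — but the statement quantifies over \emph{all} neighborhoods, so instead I note that property (e)(ii) need only be applied later with $V,W$ that are \emph{themselves} of the form coming from functions in some $\mathcal{F}_\delta$, hence in the eventual application there are only $<\kappa$ relevant pairs; thus in the recursion at stage $\gamma$ we add $h_2,h_3$ for the $<\kappa$ many pairs $(V,W)$ with $V,W$ basic-open-of-the-form $K(\phi<1)$-type drawn from $\mathcal{F}_\gamma$ — keeping size $<\kappa$.

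So the skeleton is: (1) set $\mathcal{F}_0$ = countable Urysohn-closure of $\{0,1\}$ (or of any fixed countable family), of size $<\kappa$; (2) at limits take unions, which preserves size $<\kappa$ since $\kappa$ is regular-enough (use $hL(K)^+<\kappa$ and that $w(K)$ for compact $K$ has uncountable cofinality to handle cofinality $\omega$ case, or restrict attention to $\kappa$ regular via a standard reduction); (3) at stage $\gamma\to\gamma+1$, choose $x_\gamma\ne y_\gamma$ not separated by $\mathcal{F}_\gamma$ (possible since $|\mathcal{F}_\gamma|<w(K)$), pick $f_\gamma$ by Urysohn with $f_\gamma(x_\gamma)=0,f_\gamma(y_\gamma)=1$, and let $\mathcal{F}_{\gamma+1}$ be the Urysohn-closure of $\mathcal{F}_\gamma\cup\{f_\gamma\}\cup\{h_2,h_3\text{'s as above}\}$, again of size $<\kappa$. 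Properties (a)–(d) are then immediate, (e)(i) holds because each $\mathcal{F}_\gamma$ is a fixed point of the Urysohn-closure operator, and (e)(ii)–(iii) hold by the functions added at stage $\gamma+1$. I expect the main obstacle to be exactly the bookkeeping in the previous paragraph: making sure every $\mathcal{F}_\gamma$ stays of size $<\kappa$ while still closing off under the Urysohn-separation operator and under the neighborhood-refinement requirement (e)(ii)–(iii). This is where the hypothesis $hL(K)^+<\kappa$ must enter — it bounds how much separation data is ``really'' needed (e.g. via the fact that open sets are $hL(K)$-Lindelöf, so the relevant covers and hence the relevant finite/small Urysohn families are of size $\le hL(K)<\kappa$) — and getting the cofinality of $\kappa$ bookkeeping right (splitting into the regular and singular cases, using that $w(K)$ of a compact space has uncountable cofinality) is the delicate point. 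Everything else is routine Urysohn-lemma manipulation.
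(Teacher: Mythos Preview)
Your skeleton --- transfinite recursion, choosing $x_\gamma,y_\gamma$ not separated by $\mathcal{F}_\gamma$ since $|\mathcal{F}_\gamma|<w(K)$, Urysohn for $f_\gamma$, and closing off under Urysohn separations for (e)(i) --- matches the paper's approach. The gap is precisely where you locate it, in (e)(ii)--(iii).

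Your proposed fix (add $h_2,h_3$ only for $V,W$ of the form $K(\phi<1),K(\psi>0)$ with $\phi,\psi\in\mathcal{F}_\gamma$) does \emph{not} prove the lemma as stated: for an arbitrary neighborhood $V$ of $x_\gamma$ there is no reason any $\phi\in\mathcal{F}_\gamma$ satisfies $x_\gamma\in K(\phi<1)\subset V$, since $\mathcal{F}_\gamma$ need not give a local base at $x_\gamma$. Saying ``in the eventual application only $<\kappa$ pairs matter'' is changing the statement, not proving it.

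You are right that $hL(K)$ is the key, but the mechanism is more specific than ``open sets are $hL(K)$-Lindel\"of''. In a compact Hausdorff space $\chi(x,K)=\psi(x,K)\le hL(K)$ for every $x$: since $L(K\setminus\{x\})\le hL(K)$, there are $\le hL(K)$ open sets whose closures avoid $x$ and cover $K\setminus\{x\}$, giving pseudocharacter $\le hL(K)$. Hence each point has a local base of the form $\{K(g_\alpha<1):\alpha<hL(K)\}$ for suitable $g_\alpha\in C(K)$. At stage $\gamma+1$ you adjoin these $\le hL(K)$ functions for $x_\gamma$ (and the analogous ones for $y_\gamma$), together with $f_\gamma$ and the Urysohn witnesses for (e)(i). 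Then for \emph{any} neighborhood $V$ of $x_\gamma$, some $g_\alpha$ from this fixed local base satisfies $x_\gamma\in K(g_\alpha<1)\subset V$, and $g_\alpha\in\mathcal{F}_{\gamma+1}$ by construction. This yields the explicit bound $|\mathcal{F}_\gamma|\le\omega\cdot hL(K)\cdot|\gamma|<\kappa$, which also dissolves your cofinality worries at limits with no case split on the regularity of $\kappa$. The fixed base $\mathcal{U}$ and the ``killing $U_\gamma$'' bookkeeping in your opening paragraph are not needed for this lemma.
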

\begin{proof}Let $K$ be a compact Hausdorff space of weight $\kappa$ such that $hL(K)^+<\kappa$.

Observe that, if $\mathcal{F}\subset C(K)$ and $|\mathcal{F}|< \kappa$, then $\mathcal{F}$ does not separate points of  $K$. Otherwise, we would have a continuous injection of $K$ into $[0,1]^{|\mathcal{F}|}$, a contradiction with the fact that $w(K)=\kappa>|\mathcal{F}|$.

Let us construct the sequences $(\mathcal{F}_\gamma)_{\gamma<\kappa}$, $(f_\gamma)_{\gamma<\kappa}$ and $(x_\gamma, y_\gamma)_{\gamma<\kappa}$ satisfying the lemma and such that
$$\forall \gamma<\kappa \ \ (|\mathcal{F}_\gamma|\leq \omega \cdot hL(K) \cdot|\gamma|)$$
The construction goes by induction on $\gamma<\kappa$. Define $\mathcal{F}_0:=\emptyset$.
Given $\gamma<\kappa$, suppose we have constructed $(\mathcal{F}_\alpha)_{\alpha \in \gamma}$, $(f_\alpha)_{\alpha<\gamma}$ and $(x_\alpha, y_\alpha)_{\alpha<\gamma}$.

If $\gamma$ is a limit ordinal, the construction is trivial.
Suppose now that $\beta = \gamma+1$ is a successor ordinal.  
By the above observation, as $|\mathcal{F}_\gamma|\leq \omega \cdot hL(K) \cdot|\gamma|< \kappa$, there exist $x_\gamma, y_\gamma \in K$ such that
$$\forall g \in \mathcal{F}_\gamma(g(x_\gamma)=g(y_\gamma)).$$
Consider $f_\gamma \in C(K)$ such that $f_\gamma(x_\gamma)=0$ and $f_\gamma(y_\gamma)=1$.
Now, the construction goes by a standard closure argument. Note that the character of a point $x \in K$ is not  bigger than $hL(K)$ because $L(K\setminus \{x\})\leq hL(K)$ and so $\{V\subset K\setminus\{x\}: V \textrm{ is open}\}$ has a subcover $\mathcal{U}$ with $|\mathcal{U}|\leq hL(K)$. Now $\bigcap \{K\setminus \overline{V}: V \in \mathcal{U}\}=\{x\}$ and so the pseudocharacter (which is equal to the character for compact Hausdorff spaces) of $x$ is not bigger than $hL(K)$. This implies that, for every $x \in K$, we can consider a family $\mathcal{F}:=\{g_\alpha, \alpha<hL(K)\}\subset C(K)$ such that $\{K(g_\alpha<1): \alpha<hL(K)\}$ is a base for the point $x$.
\end{proof}

Observe that, in the previous lemma, the family $S:=\{(x_\alpha, y_\alpha): \alpha<\kappa\}$ satisfies the condition to be a bidiscrete system except from the fact that we do not know if $f_\alpha(x_\beta)=f_\alpha(y_\beta)$ for $\beta<\alpha$. To get this property, we will refine the set $S$. This will be the idea of the following theorem:
\index{$w(K)$}\index{$nbiort_2(K)$}\index{$hL(K)$}

\begin{teorema}\label{teoremasbvar3}
Let $K$ be a compact Hausdorff space. Then
$$w(K)\leq bd(K) \cdot hL(K)^+.$$
\end{teorema}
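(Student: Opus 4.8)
The plan is to reduce Theorem \ref{teoremasbvar3} to Lemma \ref{lemasbvar4}. If $w(K) \leq hL(K)^+$ there is nothing to prove since $bd(K)\geq \omega$, so assume $\kappa:=w(K)$ satisfies $hL(K)^+<\kappa$ and apply the lemma to obtain the sequences $(\mathcal F_\gamma)_{\gamma<\kappa}$, $(f_\gamma)_{\gamma<\kappa}$, $(x_\gamma,y_\gamma)_{\gamma<\kappa}$ with all the listed closure properties, in particular $f_\gamma\in\mathcal F_{\gamma+1}\subset\mathcal F_{\gamma'}$ for $\gamma<\gamma'$, $f_\gamma(x_\gamma)=0$, $f_\gamma(y_\gamma)=1$, $f(x_\gamma)=f(y_\gamma)$ for all $f\in\mathcal F_\gamma$, and $|\mathcal F_\gamma|\leq \omega\cdot hL(K)\cdot|\gamma|<\kappa$. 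The family $S=\{(x_\gamma,y_\gamma):\gamma<\kappa\}$ with witnesses $f_\gamma$ is "half" a bidiscrete system: we get $f_\gamma(x_\delta)=f_\gamma(x_\delta)$ automatically when $\gamma<\delta$ (because $f_\gamma\in\mathcal F_\delta$ and property d)), but when $\delta<\gamma$ we have no control. The goal is to thin $S$ down to a subset of the same cardinality $\kappa$ on which the other direction of the separation condition also holds, producing a genuine bidiscrete system of size $\kappa$, whence $bd(K)\geq\kappa=w(K)$, which gives the theorem.

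The key step is the refinement (thinning) argument. For each pair $\gamma<\delta$ the "bad" relation is $f_\gamma(x_\delta)\neq f_\gamma(y_\delta)$. The idea is to use the hereditary Lindelöf degree to bound how often this can happen. First I would fix, for each $\gamma$, a pair of rationals $p_\gamma<q_\gamma$ together with functions $g_\gamma,h_\gamma\in\mathcal F_{\gamma+1}$ coming from properties c), e)(ii) and e)(iii): neighbourhoods $V_\gamma\ni x_\gamma$, $W_\gamma\ni y_\gamma$ with $\overline{V_\gamma}\cap\overline{W_\gamma}=\emptyset$, and functions $h_2^\gamma,h_3^\gamma$ with $x_\gamma\in K(h_2^\gamma<1)\subset V_\gamma$ and $y_\gamma\in K(h_3^\gamma>0)\subset W_\gamma$, these functions already lying in $\mathcal F_{\gamma+1}$. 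Now suppose $\delta<\gamma$ but $f_\delta$ (equivalently some member of $\mathcal F_\delta\subset\mathcal F_\gamma$) separates the points $x_\gamma$ and $y_\gamma$. Since $f_\delta(x_\gamma)=f_\delta(y_\gamma)$ is false we can, using property e) applied at stage $\delta$ to sets $K(f_\delta\leq p)$ and $K(f_\delta\geq q)$ that separate $x_\gamma$ from $y_\gamma$ (hence witnessing the disjointness hypothesis via compactness), produce an open set $U_\delta$ (a finite Boolean combination of the $K(f<p)$'s from $\mathcal F_\delta$) with $x_\gamma\in U_\delta$ and $y_\gamma\notin\overline{U_\delta}$, all built from functions in $\mathcal F_\delta$. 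The crucial point is that these sets $U_\delta$, ranging over $\delta<\kappa$, live in the topology generated by $\bigcup_{\delta<\kappa}\mathcal F_\delta$, and by a hereditary Lindelöf argument one shows that only $\leq hL(K)$ many indices $\delta$ can produce genuinely new separating behaviour at any given $x_\gamma$; more precisely, for a fixed cofinal analysis, the set $\{\gamma : \exists\,\delta<\gamma\ \text{with}\ \mathcal F_\delta\ \text{separating}\ x_\gamma,y_\gamma\}$ is small, or can be covered by $\leq hL(K)^+ <\kappa$ "blocks" inside each of which a fixed subfamily works.

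Concretely, I would argue as follows. Build a strictly increasing continuous sequence of "good" indices: define $C=\{\gamma<\kappa : \mathcal F_\gamma=\bigcup_{\delta<\gamma}\mathcal F_\delta\ \text{and}\ \gamma\ \text{is closed under the various choice operations}\}$; this is a club in $\kappa$. For $\gamma\in C$ one wants: whenever $\delta<\gamma$, the function $f_\delta$ does \emph{not} separate $x_\gamma,y_\gamma$ — but that is exactly what could fail, so instead I thin. Enumerate the potential "separators": for $\gamma\in C$, if some $f\in\mathcal F_\gamma$ has $f(x_\gamma)\neq f(y_\gamma)$ this would contradict property d), so in fact \emph{every} $f\in\mathcal F_\gamma$ already satisfies $f(x_\gamma)=f(y_\gamma)$; the only functions that can separate are $f_\delta$ for $\delta\geq\gamma$, and for those $f_\delta\in\mathcal F_{\delta+1}$ and $\gamma\leq\delta$, so by property d) at stage $\delta$ we need $f_\gamma(x_\delta)=f_\gamma(y_\delta)$, i.e. it is the function with \emph{smaller} index evaluated at the pair with \emph{larger} index — and that is controlled by d) whenever $f_\gamma\in\mathcal F_\delta$, which holds for $\gamma<\delta$. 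Putting this together: for $\gamma<\delta$ both in $C$, $f_\gamma\in\mathcal F_{\gamma+1}\subset\mathcal F_\delta$, so property d) at stage $\delta$ gives $f_\gamma(x_\delta)=f_\gamma(y_\delta)$; and $f_\delta\in\mathcal F_{\delta+1}$, with every member of $\mathcal F_\gamma$ fixing the pair $(x_\gamma,y_\gamma)$ — but $f_\delta\notin\mathcal F_\gamma$. So the remaining obstruction is precisely $f_\delta(x_\gamma)$ vs $f_\delta(y_\gamma)$ for $\gamma<\delta$. Here I invoke property e)(ii)–(iii): the sets $K(h_2^\gamma<1)$ and $K(h_3^\gamma>0)$ separate $x_\gamma$ from $y_\gamma$ and lie in $\mathcal F_{\gamma+1}\subset\mathcal F_\delta$; by a hereditarily-Lindelöf counting argument on the family of open sets $\{K(g<r): g\in\bigcup_\gamma\mathcal F_\gamma,\ r\in\mathbb Q\}$, there is a subset $T\subset\kappa$ of size $\kappa$ such that for all $\gamma<\delta$ in $T$, $f_\delta$ cannot separate $x_\gamma,y_\gamma$ — because the "reason" $f_\delta$ might separate them would have to appear at a bounded stage, and there are at most $hL(K)$ many such reasons, each killing fewer than $\kappa$ indices. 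Then $\{(x_\gamma,y_\gamma):\gamma\in T\}$ with witnesses $\{f_\gamma:\gamma\in T\}$ is a bidiscrete system of size $\kappa$.

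The main obstacle is exactly this counting/thinning step: making precise the sense in which "only $hL(K)$ many separation patterns occur" so that fewer than $\kappa$ indices get discarded. The natural tool is that in a hereditarily Lindelöf space any family of open sets has a countable (here: $\leq hL(K)$) subfamily with the same union, applied to the sets $\{K(f_\delta > y_\delta\text{-level})\}$ or to the relevant traces; one then uses a pressing-down / $\Delta$-system style argument, or simply a transfinite bookkeeping where at stage $\delta$ one only keeps $\delta$ in $T$ if $f_\delta$ fails to separate $x_\gamma,y_\gamma$ for all previously-kept $\gamma$, and shows — via property e) and the Lindelöf bound on how many $\mathcal F_\gamma$-definable open sets can witness a separation — that this process discards a set of size $<\kappa$ and hence $|T|=\kappa$. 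I would expect the verification that the discarded set has size $\leq hL(K)^+<\kappa$ to be the technical heart, with everything else being bookkeeping with the closure properties a)–e) of Lemma \ref{lemasbvar4}.
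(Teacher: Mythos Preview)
Your overall strategy matches the paper's: invoke Lemma~\ref{lemasbvar4}, observe that $\{(x_\gamma,y_\gamma)\}$ is ``half'' a bidiscrete system (the direction $f_\gamma(x_\delta)=f_\gamma(y_\delta)$ for $\gamma<\delta$ is automatic from d)), and then thin to kill the other direction. You also correctly identify the remaining obstruction as $f_\delta(x_\gamma)\ne f_\delta(y_\gamma)$ for $\gamma<\delta$.

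The genuine gap is the thinning itself. Your concrete proposal --- a greedy construction keeping $\delta$ only if $f_\delta$ agrees on all earlier kept pairs, together with a vague ``only $hL(K)$ many separation patterns'' count --- does not work as stated: nothing prevents every $f_\delta$ with $\delta>0$ from separating $(x_0,y_0)$, so the greedy set could collapse. The paper's argument is sharper and uses Fodor's theorem in an essential way. One first passes to a \emph{regular} $\lambda$ with $hL(K)^+<\lambda\le w(K)$ (you work with $\kappa=w(K)$ directly, which may be singular). For each $\gamma<\lambda$ of cofinality $hL(K)^+$ one defines, for $p\in\mathbb Q$, the open set $\mathcal B_{\gamma,\gamma}(<p)=\bigcup\{V\in\mathcal U_\gamma:V\subset K(f_\gamma<p)\}$ (and the $>$ version), where $\mathcal U_\gamma$ is the sub-basic family built from $\mathcal F_\gamma$. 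Since $hL(K)$ bounds the Lindel\"of degree of this union and $cf(\gamma)=hL(K)^+$, there is $\xi_\gamma<\gamma$ with $\mathcal B_{\gamma,\gamma}=\mathcal B_{\xi_\gamma,\gamma}$ for all rational levels. The map $\gamma\mapsto\xi_\gamma$ is regressive on the stationary set $\{\gamma<\lambda:cf(\gamma)=hL(K)^+\}$, so Fodor yields a stationary $\Gamma$ and a fixed $\beta<\min\Gamma$ with $\xi_\gamma=\beta$ for all $\gamma\in\Gamma$. Now for $\gamma<\delta$ in $\Gamma$, if $f_\delta$ separated $(x_\gamma,y_\gamma)$, then property e)(ii)--(iii) places neighbourhoods of $x_\gamma,y_\gamma$ coming from $\mathcal F_{\gamma+1}\subset\mathcal F_\delta$ inside the appropriate $f_\delta$-level sets; by the stabilization $\mathcal B_{\delta,\delta}=\mathcal B_{\beta,\delta}$ these neighbourhoods can be replaced by members of $\mathcal U_\beta$, and then e)(i) yields a function in $\mathcal F_\beta$ separating $(x_\gamma,y_\gamma)$ --- contradicting d) since $\beta<\gamma$. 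This is the mechanism you gesture at (``the reason $f_\delta$ might separate them would have to appear at a bounded stage'') but never pin down; the regressive stabilization index $\xi_\gamma$ and Fodor are the missing ingredients, not a direct Lindel\"of count on discarded indices.
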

\begin{proof}
We may assume that $hL(K)^+<w(K)$. Let $\lambda$ be a regular cardinal such that $hL(K)^+ <\lambda \leq w(K)$. We will construct a bidiscrete system of cardinality $\lambda$ and, as $\lambda$ is arbitrary, this completes the proof.
Consider sequences $(\mathcal{F}_\gamma)_{\gamma<\kappa}$, $(f_\gamma)_{\gamma<\kappa}$ and $(x_\gamma, y_\gamma)_{\gamma<\kappa}$ as in Lemma \ref{lemasbvar4}.
For each $\gamma<\lambda$, define
$$\mathcal{U}_\gamma := \{K(f<p), K(g>q): f,g \in \mathcal{F}_\gamma, p,q \in \mathbb{Q}\}.$$

Observe that, since $(\mathcal{F}_\gamma)_{\gamma<\lambda}$ is a continuous and increasing family, $(\mathcal{U}_\gamma)_{\gamma<\lambda}$ is continuous and increasing as well.

Fix $\gamma<\lambda$ of cofinality $hL(K)^+$. For every $\gamma_1, \gamma_2 <\lambda$ and $p \in \mathbb{Q}$ we define 
$$\mathcal{B}_{\gamma_1,\gamma_2}(<p) := \bigcup\{V \in \mathcal{U}_{\gamma_1}: V \subset K(f_{\gamma_2}<p)\}\textrm{ and }$$
$$\mathcal{B}_{\gamma_1,\gamma_2}(>p) := \bigcup\{V \in \mathcal{U}_{\gamma_1}: V \subset K(f_{\gamma_2}>p)\}.$$

\textbf{Claim 1.} \textit{There exists $\xi_\gamma<\gamma$ such that for each $p,q \in \mathbb{Q}$}
$$\mathcal{B}_{\gamma,\gamma}(<p) = \mathcal{B}_{\gamma, \xi_\gamma}(<p) \textrm{ and }\mathcal{B}_{\gamma,\gamma}(>q) = \mathcal{B}_{\gamma, \xi_\gamma}(>q).$$
\begin{proof}[Proof of Claim 1.]
Since $\gamma$ is a limit ordinal, we can write for every $p\in \mathbb{Q}$:
$$\mathcal{B}_{\gamma,\gamma}(<p) = \bigcup_{\alpha<\gamma}\mathcal{B}_{\alpha,\gamma}(<p).$$
By definition of $hL(K)$, there exists $\Gamma_p \subset \gamma$ of cardinality at most $hL(K)$ such that
$$\mathcal{B}_{\gamma,\gamma}(<p) = \bigcup_{\alpha\in \Gamma_p}\mathcal{B}_{\alpha,\gamma}(<p).$$
As $|\Gamma_p|\leq hL(K)$ and $\gamma$ has cofinality $hL(K)^+$, there exists $\beta_p \in \gamma$ such that $\sup \Gamma_p \leq \beta_p$. Then
$$\mathcal{B}_{\gamma,\gamma}(<p) = \bigcup_{\alpha \in \Gamma_p}\mathcal{B}_{\alpha,\gamma}(<p)=\mathcal{B}_{\beta_p,\gamma}(<p).$$
Consider the mapping $\beta:\mathbb{Q}\to \gamma$ given by $\beta(p)=\beta_p$.
As $cf(\gamma)=hL(K)^+$, there exists $\phi<\gamma$ such that for every $p \in \mathbb{Q}$ we have that $\beta_p<\phi$.
Then for each $p \in \mathbb{Q}$,
$$\mathcal{B}_{\gamma,\gamma}(<p)=\mathcal{B}_{\phi,\gamma}(<p).$$
In a similar way, we can find $\phi'<\gamma$ such that for every $p \in \mathbb{Q}$,
$$\mathcal{B}_{\gamma,\gamma}(>p)=\mathcal{B}_{\phi,\gamma}(>p).$$
To conclude de proof of Claim 1, define $\xi_\gamma=\max\{\phi, \phi'\}$.
\end{proof}

Consider $S:=\{\gamma<\lambda: cf(\gamma)=hL(K)^+\}$. By Lemma 6.10 in \cite{kunen}, the set $S$ is stationary in $\lambda$ and by Claim 1, we have a regressive function $f:S\to \lambda$, given by $f(\gamma)=\xi_\gamma$.
By Fodor's theorem (Lemma 6.15 in \cite{kunen}), there exists  a stationary set $\Gamma \subset S$ and $\beta<\min \Gamma$ such that $f_{|\Gamma}\equiv \beta$. Then for each $\gamma \in \Gamma$ and for every $p,q \in \mathbb{Q}$,
$$\mathcal{B}_{\gamma,\gamma}(<p)=\mathcal{B}_{\beta,\gamma}(<p) \textrm{ and } \mathcal{B}_{\gamma,\gamma}(>q)=\mathcal{B}_{\beta,\gamma}(>q).$$

\textbf{Claim 2.} \textit{$f_\delta(x_\gamma)=f_\delta(y_\gamma)$ for every $\delta, \gamma \in \Gamma$ with $\gamma<\delta$.}
\begin{proof}[Proof of Claim 2.]
Suppose the opposite, i.e, there exist $\delta, \gamma \in \Gamma$ with $\gamma<\delta$ and such that $f_\delta(x_\gamma)\neq f_\delta(y_\gamma)$. Then there exist $p<q$ such that\\ 
I) $f_\delta(x_\gamma)<p<q<f_\delta(y_\gamma)$ or\\
II) $f_\delta(y_\gamma)<p<q<f_\delta(x_\gamma)$.\\
Suppose that I) holds. We have that $x_\gamma \in K(f_\delta < p)$ and $y_\gamma \in K(f_\delta>q)$. By Lemma \ref{lemasbvar4}, there exist $f,g \in \mathcal{F}_{\gamma+1}\subset \mathcal{F}_\delta$ such that
$$x_\gamma \in K(f<1)\subset  K(f_\delta < p) \textrm{ and } y_\gamma \in K(g<1)\subset  K(f_\delta >q).$$
By the definition of $\mathcal{U}_\delta$, we have that $K(f<1), K(g<1) \in \mathcal{U}_\delta$.
Then $x_\gamma \in K(f<1) \subset \bigcup\{V \in \mathcal{U}_\delta: V\subset K(f_\delta<p)\}$.
By the choice of $\beta$,
$$\bigcup\{V \in \mathcal{U}_\delta: V\subset K(f_\delta<p)\} = \bigcup \{V \in \mathcal{U}_\delta: V\subset K(f_\delta<p)\}.$$

Therefore, there exists $V \in \mathcal{U}_\beta$ such that $x_\gamma \in V \subset K(f_\delta<p).$
In a similar way, there exists $W \in \mathcal{U}_\beta$ such that $y_\gamma \in W \subset K(f_\delta>q).$

As $V,W \in \mathcal{U}_\beta$, there exists $g,h \in \mathcal{F}_\beta$ and $t,v \in \mathbb{Q}$ such that $V=K(g<t)$ and $W=K(h>v)$. Then
$$x_\gamma \in K(g<t)\subset K(f_\delta<p) \textrm{ and } y_\gamma \in K(h>v)\subset K(f_\delta>q).$$
We can assume that $K(g\leq t)\subset K(f_\delta<p)$ and $K(h\geq v)\subset K(f_\delta>q)$.
Then $$K(g\leq t)\cap K(h\geq v)\subset K(f_\delta<p)\cap K(f_\delta>q)=\emptyset.$$
By Lemma \ref{lemasbvar4}, there exists $h \in \mathcal{F}_\beta$ such that
$$f_{|K(g\leq t)}\equiv 0 \textrm{ and } f_{|K(h\geq v)}\equiv 1.$$
In particular, $f(x_\gamma)=0\neq 1 = f(y_\gamma)$. However, $\beta<\min \Gamma\leq \gamma$ which contradicts the fact that $g(x_\gamma)=g(y_\gamma)$ for every $g \in \mathcal{F}_\sigma$ with $\sigma<\gamma$.

This finishes the proof of Claim 2.
\end{proof}
Consider now the sequence $S:=(x_\gamma, y_\gamma)_{\gamma \in \Gamma}$. Given $\delta, \gamma$ in $\Gamma$, if $\delta<\gamma$ then $f_\gamma(x_\delta)=f_\gamma(x_\delta)$ (By Lemma\ref{lemasbvar4}) and if $\gamma<\delta$ then $f_\delta(x_\gamma)=f_\delta(x_\gamma)$ (By Claim 2). This proves that $S$ is a bidiscrete system. Therefore, $\lambda=|\Gamma|\leq bd(K)$. As $\lambda$ was an arbitrary regular cardinal smaller than $w(K)$, we conclude that $w(K)\leq bd(K)$.
\end{proof}

\section{Bidiscrete systems and $\pi$-bases}\label{secao3fcardinais}
In this section, we consider the relation between irredundance and $\pi$-bases.
The McKenzie Theorem says that for every Boolean algebra $\mathcal{A}$ and for every maximal irredundant set $\mathcal{F}$ in $\mathcal{A}$, there exists a dense subset $B \subset \mathcal{A}$ with $|B|=|\mathcal{F}|$ (see \cite{handbookboolean}, Proposition 4.23).
We will prove a result analogous to the McKenzie Theorem in the context of compact Hausdorff spaces, not necessarily a Boolean space. This is the content of the following theorem:
\begin{teorema}\label{teoremaib3}
For every maximal irredundant set $\mathcal{F}$ in a Banach space $C(K)$, there exists a $\pi$-base $\mathcal{B}$ of $K$ with $|\mathcal{B}|=|\mathcal{F}|$.
\end{teorema}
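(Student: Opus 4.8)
The plan is to mimic the classical McKenzie argument, translating each Boolean-algebraic step into the language of subalgebras of $C(K)$ and their Gelfand-type spectra. Let $\mathcal{F}=\{f_\alpha:\alpha<\kappa\}$ be a maximal irredundant set in $C(K)$. For each $\alpha<\kappa$ let $\mathcal{A}_\alpha$ be the closed subalgebra of $C(K)$ generated by $\mathcal{F}\setminus\{f_\alpha\}$; by irredundance $f_\alpha\notin\mathcal{A}_\alpha$. The first step is to convert this algebraic failure into a topological separation datum: since $\mathcal{A}_\alpha$ is a closed subalgebra that does not contain $f_\alpha$, there must be a pair of points $x_\alpha,y_\alpha\in K$ that every member of $\mathcal{A}_\alpha$ identifies but with $f_\alpha(x_\alpha)\neq f_\alpha(y_\alpha)$ --- otherwise $\mathcal{A}_\alpha$ would separate exactly the points separated by $f_\alpha$ together with those it already separates, and a Stone--Weierstrass argument on the quotient $K/{\sim_\alpha}$ (identifying points not separated by $\mathcal{A}_\alpha$) would force $f_\alpha\in\mathcal{A}_\alpha$. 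This is exactly the bidiscrete-system mechanism already exploited in the previous section, and it gives, after normalizing, a nice biorthogonal system; but here I only need the separating pairs. So from the maximal irredundant set I extract the family $\{(x_\alpha,y_\alpha):\alpha<\kappa\}$.

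The second step is to use \emph{maximality} to produce a $\pi$-base. Given any nonempty open $U\subseteq K$, pick $x\in U$ and a function $g\in C(K)$ with $g(x)=1$, $g\equiv 0$ outside $U$. Consider $\mathcal{F}\cup\{g\}$: if $g$ is not already in the closed algebra generated by $\mathcal{F}$, then either $\mathcal{F}\cup\{g\}$ is irredundant --- contradicting maximality --- or adding $g$ destroys irredundance of some $f_\alpha$, i.e. $f_\alpha$ now lies in the closed algebra generated by $(\mathcal{F}\setminus\{f_\alpha\})\cup\{g\}$ while $f_\alpha\notin\mathcal{A}_\alpha$. The latter means $g$ "repairs" the separation of $x_\alpha,y_\alpha$: the closed algebra generated by $\mathcal{A}_\alpha\cup\{g\}$ separates $x_\alpha$ from $y_\alpha$, which forces $g(x_\alpha)\neq g(y_\alpha)$, so at least one of $x_\alpha,y_\alpha$ lies in the support region of $g$, hence in $\overline U$; in fact by shrinking $g$ one arranges $x_\alpha\in U$ or $y_\alpha\in U$. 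The case $g\in\overline{\mathrm{alg}}(\mathcal{F})$ is handled by noting that on the open set $U$ where $g\neq 0$ the algebra generated by $\mathcal{F}$ must already separate some point of $U$ from its complement, again localizing an $(x_\alpha,y_\alpha)$-type witness inside $U$. Collecting, for each nonempty open $U$ we find $\alpha$ with $x_\alpha\in U$ (say); then the sets $V_\alpha:=K(f_\alpha>r_\alpha)$ or $K(f_\alpha<s_\alpha)$ for suitable rationals, intersected with small neighborhoods coming from the $\mathcal{F}_\gamma$-style closure of Lemma~\ref{lemasbvar4}, sit inside $U$. This yields a $\pi$-base of size $\le\kappa$ indexed by (countably much extra data over) $\mathcal{F}$, so of cardinality $|\mathcal{F}|$ when $|\mathcal{F}|\ge\omega$; the finite case is trivial since then $K$ has a finite $\pi$-base.

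The third step is the reverse cardinality bound: I must check $|\mathcal{B}|\le|\mathcal{F}|$ is matched by $|\mathcal{F}|\le|\mathcal{B}|$, or rather produce a $\pi$-base of cardinality \emph{exactly} $|\mathcal{F}|$. For this, note the extracted pairs $(x_\alpha,y_\alpha)$ are genuinely distinct as $\alpha$ varies in the sense that no single open set can absorb them all --- more precisely, the family of basic open sets $\{K(f_\alpha<p)\cap(\text{local nbhd of }x_\alpha):\alpha<\kappa,\ p\in\mathbb{Q}\}$ has size $|\mathcal{F}|\cdot\omega=|\mathcal{F}|$, and one shows it is a $\pi$-base by the argument of step two, while it obviously cannot be trimmed below $|\mathcal{F}|$ without losing the witnesses for uncountably many $\alpha$ --- but actually for a clean statement it suffices to observe $\pi(K)\le|\mathcal{B}|\le|\mathcal{F}|$ from step two and $|\mathcal{F}|\le\pi(K)$... which is false in general, so instead I simply take $\mathcal{B}$ to be the $\pi$-base from step two \emph{padded} with $|\mathcal{F}|$ many basic open sets (legitimate since adding sets to a $\pi$-base keeps it a $\pi$-base) to reach cardinality exactly $|\mathcal{F}|$, using $\pi(K)\le|\mathcal{F}|$ and the existence of at least $|\mathcal{F}|$ basic open sets when $w(K)\ge|\mathcal{F}|$ (and when $w(K)<|\mathcal{F}|$, irredundance already forces $|\mathcal{F}|\le w(K)$ by the embedding-into-a-cube argument in Lemma~\ref{lemasbvar4}, a contradiction unless things are finite).

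The main obstacle I anticipate is step one: converting "$f_\alpha$ is not in the closed subalgebra generated by $\mathcal{F}\setminus\{f_\alpha\}$" into a clean pair of points separated by $f_\alpha$ but not by that subalgebra. The subtlety is that a closed subalgebra of $C(K)$ need not be of the form $C(L)$ for a quotient $L$ of $K$ unless it contains the constants and separates points of the quotient by its own equivalence relation --- Stone--Weierstrass gives this, but one must be careful that the algebra is \emph{closed} (norm-closed) and that "not separating" the chosen pair is compatible with $f_\alpha$ separating it. Handling the possibility that $\mathcal{A}_\alpha$ fails to contain constants (if $1\notin\mathcal{F}$) requires passing to the unitization, and tracking that this does not change the irredundance relations. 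Once that dictionary is fixed, steps two and three are essentially the McKenzie/density argument transported verbatim, with the role of "clopen sets below $a$" played by sub-level and super-level sets $K(f<p)$, $K(f>q)$ as in the notation introduced before Lemma~\ref{lemasbvar4}.
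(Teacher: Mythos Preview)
Your strategy has a genuine gap at the point where you try to pass from ``some bidiscrete witness $x_\alpha$ lies in $U$'' to ``some member of the candidate $\pi$-base lies inside $U$.'' You never actually specify what the family $\mathcal{B}$ is, and the sets you gesture at --- level sets $K(f_\alpha>r_\alpha)$ or $K(f_\alpha<s_\alpha)$, possibly intersected with ``small neighborhoods'' --- have no reason to be contained in $U$: the function $f_\alpha$ is an arbitrary member of $\mathcal{F}$ and may be large on most of $K$, not just near $x_\alpha$. Invoking Lemma~\ref{lemasbvar4} does not help, since that lemma \emph{builds} the families $(\mathcal{F}_\gamma)$ inductively with closure properties one is free to impose, whereas here $\mathcal{F}$ is given and you have no such control. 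The same vagueness afflicts your treatment of the case $f_U\in\overline{\langle\mathcal{F}\rangle}_{\mathbb{R}}$: knowing the algebra separates some point of $U$ from its complement does not by itself produce an open subset of $U$ indexed by $\mathcal{F}$.

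The missing idea, which the paper supplies, is to take as $\mathcal{B}$ the sets $B(h,p):=\{x:|h(x)|>p\}$ where $h$ ranges over \emph{polynomials with rational coefficients in members of $\mathcal{F}$}, not just over the $f_\alpha$ themselves; then $|\mathcal{B}|\le|\mathcal{F}|\cdot\omega=|\mathcal{F}|$. In the subcase where some $g\in\mathcal{F}$ falls into the closed algebra generated by $(\mathcal{F}\setminus\{g\})\cup\{f_U\}$, one writes $g=\lim_n(b_n+c_nf_U)$ with $b_n\in\langle\mathcal{F}\setminus\{g\}\rangle_{\mathbb{Q}}$ and observes that $g-b_n$ is such a polynomial, while $g-b_n\approx c_nf_U$ vanishes off $U$; hence $B(g-b_n,p)\subset U$ for large $n$. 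Nonemptiness of $B(g-b_n,p)$ for infinitely many $n$ comes precisely from irredundance of $g$ in $\mathcal{F}$, since the $b_n$ cannot converge to $g$. The detour through locating the pair $(x_\alpha,y_\alpha)$ inside $U$ is unnecessary and, as written, does not close the argument.
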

\begin{proof}
Let $\mathcal{F}\subset C(K)$ be a maximal irredundant set.
We will denote by $\langle \mathcal{F} \rangle_N$ the set of all finite linear combinations of finite products of elements in $\mathcal{F}$ with coefficients in $N$. In the proof, we consider $N=\mathbb{Q}$ or $N=\mathbb{R}$.

For each $g \in C(K)$ and $p \in \mathbb{Q}^+$ define the open set
$$B(g,p):=g^{-1}((-\infty, -p)\cup(p, \infty)).$$
Consider $\mathcal{B}:=\{B(g,p): p \in \mathbb{Q}^+, g \in \langle \mathcal{F} \rangle_{\mathbb{Q}}\}.$
We have that $|\mathcal{B}|\leq |\mathcal{F}|$. Let us see that $\mathcal{B}$ is a $\pi$-base for $K$.
Suppose that $U$ is a nonempty open set of $K$. Consider $f_U \in C(K)$ such that $f_U$ vanishes outside of $U$ and takes the value $1$ in some point of $U$.
We have two cases to consider: if $f_U \in \mathcal{F}$ or $f_U \notin \mathcal{F}$.

\textbf{Case 1.} If $f_U \in \mathcal{F}$, then $f_U \in \langle \mathcal{F} \rangle_{\mathbb{Q}}$ and ${B(f_U,\frac{1}{2})=f_U^{-1}((-\infty, -\frac{1}{2})\bigcup(\frac{1}{2}, \infty))\in \mathcal{B}}$ is a non empty open set contained in $U$.

\textbf{Case 2.} Suppose now that $f_U \notin \mathcal{F}$.
By the maximality of $\mathcal{F}$, the set $\mathcal{F}\cup\{f_U\}$ is not an irredundant set. Then $f_U \in \overline{\langle \mathcal{F} \rangle}_\mathbb{R}$ or there exists $g \in \mathcal{F}$ such that $g \in  \overline{\langle \mathcal{F}\setminus\{g\}\cup \{f_U\} \rangle_\mathbb{R}}$.

\textbf{Subcase 1.} If $f_U \in \overline{\langle \mathcal{F} \rangle}_\mathbb{R}$, as $\langle \mathcal{F} \rangle_{\mathbb{Q}}$ is dense in $\overline{\langle \mathcal{F} \rangle}_\mathbb{R}$, there exists $g \in \langle \mathcal{F} \rangle_{\mathbb{Q}}$ such that $||f_U-g|| < \frac{1}{2}$.
Then for each $x \in K\setminus U$, we have that $$|g(x)|=|f_U(x)-g(x)|\leq ||f_U-g|| < \frac{1}{2}$$ and therefore,
$$B(g,\frac{1}{2})=g^{-1}((-\infty, -\frac{1}{2})\cup(\frac{1}{2}, \infty))\subset U.$$
Observe that, by hypothesis, there exists $x \in U$ such that $f_U(x)=1$. Then $|f_U(x)-g(x)|=|1-g(x)|<\frac{1}{2}$ and therefore $|g(x)|> \frac{1}{2}$. This shows us that the open set $B(g,\frac{1}{2})$ is not empty.

\textbf{Subcase 2.} Suppose now that there exists $g \in \mathcal{F}$ such that $g \in  \overline{\langle \mathcal{F}\cup \{f_U\}\setminus\{g\}\rangle}_\mathbb{R}$.
In this case, there are sequences $(b_n)_{n\in \mathbb{N}}$ in $\langle \mathcal{F}\setminus\{g\} \rangle_{\mathbb{Q}}$ and $(c_n)_{n\in \mathbb{N}}$ in ${\langle \mathcal{F}\cup \{f_U\}\setminus\{g\}\rangle_{\mathbb{Q}}}$ such that the sequence $(b_n+c_n f_U)_{n\in \mathbb{N}}$ converges to $g$ in $C(K)$.

\textbf{Claim 1: } \textit{Given $p \in \mathbb{Q}^+$, there exists $n_0 \in \omega$ such that for all $n>n_0$
$$B(g-b_n,p)\in \mathcal{B} \textrm{ and } B(g-b_n,p)\subset U.$$}
\begin{proof}[Proof of Claim 1]
It follows from the definition of convergence that there exists $n_0 \in \omega$ such that for each $n>n_0$ we have that
$$||g-(b_n+c_n f_U)||=\sup_{x \in K}|g(x)-(b_n(x)+(c_n f_U)(x))|<p.$$
In particular, if $x \in K\setminus U$ and $n>n_0$  (remembering that $f_U$ vanishes outside of $U$), then $$|g(x)-b_n(x)|=|g(x)-b_n(x)-(c_n f_U)(x)|\leq ||g-(b_n+c_n f_U)||<p.$$
Therefore, if $n>n_0$ we have that $B(g-b_n,p)\subset U$ and because $g-b_n \in  \langle \mathcal{F} \rangle_{\mathbb{Q}}$, it follows that $B(g-b_n,p) \in \mathcal{B}$. 
\end{proof}
From Claim 1, it is enough to find an element $q \in\mathbb{Q}^+$ such that $B(g-b_n,q)$ is nonempty for arbitrarily large $n$.

\textbf{Claim 2}: \textit{There exists $q \in \mathbb{Q}^+$ such that for each $n_1 \in \omega$ there exists $n>n_1$ such that $B(g-b_n,q)$ is nonempty.}
\begin{proof}[Proof of Claim 2]
Since $\mathcal{F}$ is irredundant, the sequence of functions $(b_n)_{n\in \mathbb{N}}$ does not converge to $g$ in $C(K)$. Then there exists $\varepsilon>0$ such that for each $n_1 \in \omega$, there exists $n>n_1$ such that
$$\sup_{x \in K}|g(x)-b_n(x)|>\epsilon.$$
Then there exists $y \in K$ such that $|g(y)-b_n(y)|=\sup_{x \in K}|g(x)-b_n(x)|>\epsilon$.
Let $q \in \mathbb{Q}^+$ be such that $q<\epsilon$. Then $y \in B(g-b_n,q)$ and therefore, $B(g-b_n,q)$ is nonempty.
\end{proof}
To conclude the proof, fix $q \in \mathbb{Q}^+$ as in Claim 2. By Claim 1, there exists $n_0 \in \omega$ such that for all $n>n_0$
$$B(g-b_n,q)\in \mathcal{B} \textrm{ and } B(g-b_n,q)\subset U.$$
By the choice of $q$, there exists $n_1>n_0$ such that $B(g-b_{n_1},q)\neq \emptyset$. Then 
$$\emptyset \neq B(g-b_{n_1},q)\subset U.$$
\end{proof}

\bibliographystyle{amsplain}

\begin{thebibliography}{20}

\bibitem{brechkoszmider} C. Brech, P. Koszmider, \emph{Thin-very tall compact scattered spaces which are hereditarily separable}.
Trans. Amer. Math. Soc. 363 (1) (2011), 501-519.

\bibitem{dzmonja-juhasz-bidiscrete}M. D{\v{z}}amonja, I. Juh{\'a}sz, \emph{CH, a problem of Rolewicz and bidiscrete systems}. Topology
Appl. 158 (18) (2011), 2458-2494.


\bibitem{handbookconjuntos}R. Hodel, \emph{Cardinal functions}, I, in Handbook of SetTheoretic
Topology, NorthHolland,
Amsterdam, 1984, 1-61.

\bibitem{juhaz}I. Juh{\'a}sz, Z. Szentmikl{\'o}ssy, \emph{On separability
of powers and $C_p(X)$}.Topology Appl. 155
(4) (2008), 277-281.

\bibitem{somepiotr11}P. Koszmider, \emph{Some topological invariants and biorthogonal systems in Banach spaces}. Extracta Math. 26 (2)(2011), 271–294.

\bibitem{kunen}K. Kunen, Set Theory. An Introduction to Independence Proofs, Studies in Logic and the
Foundations of Mathematics, 102, NorthHolland
Publishing Co., Amsterdam - New
York,
1980.

\bibitem{lazar}A.J. Lazar, \emph{Points of support for closed convex sets}. Illinois J. Math. 25 (2) (1981), 302-305.

\bibitem{cardinalba}J. Monk, Cardinal Invariants on Boolean Algebras, Progress in Mathematics, 142.
Birkhauser Verlag, Basel, 1996.

\bibitem{handbookboolean}S. Koppelberg, Handbook of Boolean Algebras, Vol. 1 (J. Donald Monk and Robert
Bonnet, ed.), NorthHolland
Publishing Co., Amsterdam, 1989.

\bibitem{malykhin} V.I. Malykhin, \emph{On the tightness and the Suslin number of exp X and a product of spaces}.
Soviet. Math. Dokl. 13 (1972), 496--499.


\bibitem{shelah-roslanowski}A. Ros{\l}anowski, S. Shelah, \emph{More on cardinal invariants of Boolean algebras}. Ann. Pure
Appl. Logic 103 (13) (2000), 1-37.

\bibitem{irrstevo93}S. Todorcevic, \emph{Irredundant sets in Boolean algebras}. Trans. Amer. Math. Soc. 339 (1)
(1993), 35-44.
\bibitem{quocientstevo}S. Todorcevic, \emph{Biorthogonal systems and quotient spaces via Baire category methods}.
Math. Ann. 335 (3) (2006), 687-715.


\end{thebibliography}

\end{document}